\documentclass[a4paper,12pt]{article}
\usepackage[utf8]{inputenc}
\usepackage{amsmath}
\usepackage{amssymb}
\usepackage{amsthm}
\usepackage{cite}
\usepackage{graphicx}
\usepackage{tikz}
\usepackage{pgfplots}

\pgfplotsset{compat=1.17}

\setlength{\paperheight}{29.7cm}
\setlength{\paperwidth}{21cm}
\setlength{\voffset}{0cm}
\setlength{\hoffset}{0cm}
\setlength{\textwidth}{16cm}
\setlength{\textheight}{22.5cm}
\setlength{\oddsidemargin}{0cm}
\setlength{\evensidemargin}{0cm}
\setlength{\topmargin}{0cm}
\setlength{\headheight}{14.5pt}
\setlength{\headsep}{0.5cm}
\setlength{\topskip}{0.5cm}
\setlength{\footskip}{1cm}
\setlength{\tabcolsep}{0.2cm}
\setlength{\parindent}{0pt}

\setcounter{topnumber}{2}
\setcounter{bottomnumber}{2}
\setcounter{totalnumber}{4}     
\setcounter{dbltopnumber}{2}    

\setlength{\parindent}{0pt}

\makeatletter
\renewcommand*\env@matrix[1][r]{\hskip -\arraycolsep
  \let\@ifnextchar\new@ifnextchar
  \array{*\c@MaxMatrixCols #1}}
\makeatother

\newcommand{\e}{\mathrm{e}}
\newcommand{\pt}{\partial}
\newcommand{\eps}{\varepsilon}
\newcommand{\norm}[2]{\|{#1}\|_{#2}}
\newcommand{\tnorm}[1]{\left|\!\!\;\left|\!\!\;\left| {#1}
                       \right|\!\!\;\right|\!\!\;\right|}

\newcommand{\U}{\mathcal{U}}
\newcommand{\PS}{\mathcal{P}}
\newcommand{\scp}[1]{\left\langle #1 \right\rangle}

\newcommand{\dx}{\mathrm{d}x}
\newcommand{\jump}[1]{[\hspace*{-2pt}[#1]\hspace*{-2pt}]}
\newcommand{\Qm}[1]{Q_m\left[#1\right]}
\newcommand{\tmi}{t_{m,i}}

\theoremstyle{plain}
\newtheorem{theorem}{Theorem}[section]
\newtheorem{lemma}[theorem]{Lemma}

\newtheorem{remark}[theorem]{Remark}
\newtheorem{Notation}{Notation}
\title{A balanced norm error estimation for the time-dependent reaction-diffusion problem with shift in space
	}
\author{Mirjana Brdar,\footnote{Faculty of Technology Novi Sad, University of Novi Sad, Serbia,\newline \mbox{e-mail}: mirjana.brdar@uns.ac.rs}
        \,
        Sebastian Franz,\footnote{Institute of Scientific Computing, Technische Universit\"at Dresden, Germany,\newline  \mbox{e-mail}: sebastian.franz@tu-dresden.de}
        \,
        Lars Ludwig,\footnote{Institute of Scientific Computing, Technische Universit\"at Dresden, Germany,\newline  \mbox{e-mail}: lars.ludwig@tu-dresden.de}
        \,
        Hans-G\"{o}rg Roos,\footnote{Institute of Numerical Mathematics, Technische Universit\"at Dresden, Germany,\newline  \mbox{e-mail}: hans-goerg.roos@tu-dresden.de}}

\begin{document}
\maketitle

\begin{abstract}
 We consider a singularly perturbed time-dependent problem with a shift term in space. On appropriately
    defined layer adapted meshes of Dur\'{a}n- and S-type we derive a-priori error estimates for the stationary
    problem. Using a discontinuous Galerkin method in time we obtain error estimates for the full discretisation.
    Introduction of a weighted scalar products and norms allows us to estimate the error of the time-dependent problem in energy
    and balanced norm. So far it was open to prove such a result. Error estimates in the energy
norm is for the standard finite element discretization in space, and for
the error estimate in the balanced norm the computation of the numerical
solution is changed by using a different bilinear form. Some numerical results are given to confirm the
    predicted theory and to show the effect of shifts on the solution.
\end{abstract}

  \textit{AMS subject classification (2010): 65M12, 65M15, 65M60}

  \textit{Key words: time-dependent, spatial shift, singularly perturbed, discontinuous Galerkin}

\section{Introduction}

\label{sec:intro}
  Singularly perturbed problems with some kind of shifts often represent mathematical models of various phenomena
  in the biosciences and control theory, \cite{Glizer, LongtinMilton, Wazewska}.
  In this paper we are interested in time-dependent singularly perturbed reaction-diffusion problems with large shifts
  in space that arise especially in the theoretical analysis of neuronal variability, \cite{Fienberg, Segundo} for example in determination of
  the behaviour of a neuron to random synaptic inputs. The first paper on this subject was given by Stein in 1965 \cite{Stein},
  where he proposed a practical model for the stochastic effects due to the neuronal variability and obtained the approximate
  solution to the differential-difference equation model using Monte Carlo techniques. After more than two decades of Stein's
  work, Musila and L\'{a}nsk\'{y} generalised his model \cite{MusilaL}. Lange and Miura were the first who consider various
  classes of singularly perturbed ordinary differential equations where small shifts appear in the solution and its first
  derivative and they used an asymptotic approach for analysis, \cite{LM1, LM2, LM3}. So far, mostly this type of problem
  (stationary problem) has been solved by the method of central differences, of which in some papers Taylor's series was
  used \cite{KumarKadalbajoo, Gupta}, and in some of them the solution was obtained directly \cite{Bansal, Chakravarthy, Devendra}.

  There are only a few papers that use the finite element method. For the stationary problem,
  Nicaise and Xenophontos in \cite{NX13} use the hp-version of the standard finite element method on a layer-adapted mesh and Zarin in \cite{Zarin14} uses the
  more complicated non-symmetric discontinuous Galerkin (dG) method with interior penalties.

  In this paper we consider the following parabolic singularly perturbed problem with a shift, which represents a
  generalization of the parabolic equation from \cite{Devendra}: Find $u$ such that
  \begin{subequations}
     \begin{align}
      Lu(x,t)&\equiv \pt_t u(x,t) - \varepsilon^2  \pt_x^2u(x,t) + a(x,t) u(x,t)+b(x,t)u(x-1,t)\notag\\&
          =f(x,t), \quad (x,t)\in D, \label{1}\\[1ex]
      u(x,0)&=u_0(x), ~\quad x\in\bar{\Omega},\label{2}\\[1ex]
      u(x,t)&=\Phi(x,t), \quad (x,t)\in D_L=\{(x,t):-1< x\leq 0; t\in\bar{\Lambda}\}\label{3}\\[1ex]
      u(x,t)&=\Psi(x,t), \quad (x,t)\in D_R=\{(2,t): t\in\bar{\Lambda}\}\label{4}
    \end{align}
  \end{subequations}
  where $\varepsilon\in(0,1]$ is a small perturbation parameter and $D=\Omega \times\Lambda=(0,2)\times(0,T]$.
  The functions $a,$ $b,$ $f,$ $\Phi,$ $\Psi$ and $u_0$ are sufficiently smooth, bounded and independent of $\varepsilon.$ It is also assumed that $a$ and $b$ satisfy
  \begin{align}\label{5}
    a(x,t)\geq \alpha^2>0
    \quad \mbox{and} \quad \alpha^2-\|b(x,t)\|_{\infty}\geq\gamma>0, \quad x\in\bar{\Omega},\, t\in (0,T]
  \end{align}
  where $\alpha$ and $\gamma$ are constants.
  The solution of this problem is characterised by two exponential boundary layers near $x=0$ and $x=2$,
  and a third, inner layer near $x=1$ may appear depending on the function $b$. 
\begin{figure}[ht]
    \begin{center}
      \includegraphics[scale=0.5]{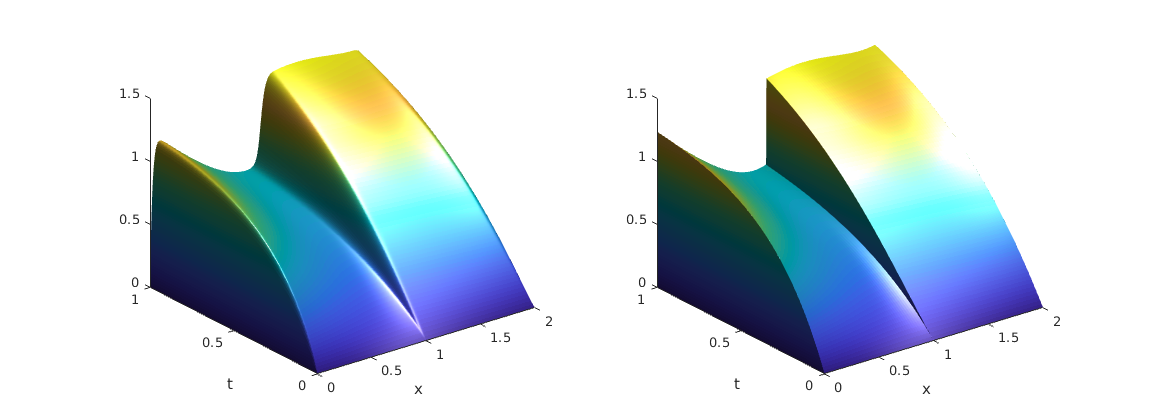}
         \end{center}
    \caption{Graph of the solution 
            for $\varepsilon=0.04$ (left) and $\varepsilon=0.001$ (right) \label{fig:1}}
  \end{figure}  
%
%
  To give a quick idea how a solution to (1) may look like, Figure \ref{fig:1} depicts two
space-time solutions of the numerical example in Section 4 for two different values
of $\eps$.

  We use for the discretisation in space the standard finite element method on two different classes of
  layer-adapted meshes. The introduction of a weight $\beta$, similarly to \cite{MS21}, allows to
  analyse the discretisation error in space not only in the energy norm, but, additionally, in a balanced
  norm, which reflects the layer behavior of the solution better than the energy norm.
  For the full discretisation we prefer the time discretisation with the discontinuous Galerkin (dG) method due to its
  flexibility and the possibility of arbitrarily high order.
  In the context of parabolic equations this method was already introduced and analysed 1978, see also \cite{Jamet, EJT, Thomee}.

  Recently in \cite{FrM16} the authors gave
  a general numerical analysis of time-dependent singularly perturbed problems discretised
  in time by the dG method and applicable to numerous spatial operators. We will apply the results of
  that paper in the analysis of our problem.
  The introduction of the weight $\beta$ yields a balanced error estimation for the time-dependent
  problem. So far it was open to prove such a result, even for the time-dependent reaction-diffusion case. This new approach is not
  restricted to the special shift problems studied here, but can also be used for other time-dependent singularly perturbed problems.

  The paper is organised as follows: In Section \ref{sec:stat} we present an analysis for the stationary singularly perturbed shift problem,
  including information on a solution decomposition together with layer adapted meshes, weighted norm, finite element method. 
  The main results for the time-dependent problems are in the Section \ref{sec:full} and are experimentally verified in Section \ref{sec:numerics}.
  \vspace{2mm}

  \begin{Notation}
    For a set $D$, we use the standard notation
    of Sobolev spaces, where $\|\cdot\|_{0,D}$ is the $L^2-$norm, $\|\cdot\|_{k,D}$ is the full $H^k-$norm and
    $|\cdot|_{k,D}$ is the seminorm.
    The standard scalar product in $L_2(D)$ is marked with $\scp{\cdot,\cdot}_D$.
    If $D=\Omega$ we drop the $\Omega$ from the notation.
    Throughout the paper, we will write $A\lesssim B$ if there exists a generic positive constant $C$ independent of the perturbation parameter $\varepsilon$ and the mesh,
    such that $A\leq C B$. We will also write $A\sim B$ if $A\lesssim B$ and $B\lesssim A$.
  \end{Notation}


\section{The stationary problem}\label{sec:stat}
  Let us consider the stationary problem
  \begin{subequations}\label{eq:stat}
    \begin{align}
      -\eps^2\pt_x^2 u(x) + a(x)u(x) + b(x)u(x-1)&=f(x),\quad 0<x<2,\\
      u(x)&=\Phi(x),\quad -1<x\leq0,\\
      u(2)&=0,
    \end{align}
  \end{subequations}
  where $a\geq\alpha^2>0$ and $\alpha^2-\norm{b}{\infty}\geq\gamma>0$. Without loss of generality
  we can assume $\Phi(0)=0$ as otherwise the transformation
  \[
    u(x)=\tilde u(x)+\left( 1-\frac{x}{2} \right)\Phi(0)
  \]
  gives a problem for $\tilde u$ with homogeneous boundary conditions and changed data $f$ and $\Phi$.

  \subsection{Solution decomposition and spatial mesh}
  In \cite{NX13} the problem \eqref{eq:stat} was considered in the case of $b_1<b<b_2<0$
  and a solution decomposition was obtained. By the same analysis the more general case of a bounded $b$
  can be analysed. Here we want to emphasize that for negative $b$ the maximum principle holds, but for positive $b$ we should proceed as in the convection-diffusion case, see \cite{BFLR}.
  Assuming $f$ to be smooth enough, we have
  \begin{subequations}
  \begin{equation}\label{sd}
    u=s+w_1+w_2+w_3,
  \end{equation}
  where for a given $k>0$ and all $\ell\in\{0,\dots,k+1\}$ it holds
  \begin{align}
    \norm{\pt_x^\ell s}{L^\infty(0,2)}&\lesssim 1,\label{s}\\
    |\pt_x^\ell w_1(x)|&\lesssim \eps^{-\ell}\e^{-\frac{\alpha x}{\eps}},&
    |\pt_x^\ell w_2(x)|&\lesssim \eps^{-\ell}\e^{-\frac{\alpha |x-1|}{\eps}},&
    |\pt_x^\ell w_3(x)|&\lesssim \eps^{-\ell}\e^{-\frac{\alpha (2-x)}{\eps}}.\label{exp}
  \end{align}
  \end{subequations}
  Thus, $w_1$ and $w_3$ are the layer functions corresponding to the left and right boundary, resp., while
  $w_2$ is an interior layer function, and $s$ represents the smooth part.

  Having the knowledge about the layer structure, we can construct a layer-adapted mesh $\Omega_h$ that resolves the
  layers. We will consider two types of meshes: S-type meshes (analogously the analysis on B-type meshes works) and graded Dur\'{a}n mesh.

\subsubsection{S-type meshes}

  For the construction of an S-type mesh, see \cite{RL99}, let us assume the number of cells $N$ to be divisible by 8.
  Next we define a mesh transition value
  \[
    \lambda = \frac{\sigma\eps}{\alpha}\ln(N),
  \]
  with a constant $\sigma$ to be specified later. Then using a monotonically increasing mesh defining function $\phi$ with $\phi(0)=0$ and $\phi(1/2)=\ln(N)$,
  see \cite{RL99} for the precise conditions on $\phi$,
  we construct the mesh nodes
  \begin{equation}\label{Smesh}
    x_i=\begin{cases}
           \frac{\sigma\eps}{\alpha}\phi\left(\frac{4i}{N}\right),&       0\leq i \leq \frac{N}{8},\\
           \frac{4i}{N}(1-2\lambda)+2\lambda-\frac{1}{2},& \frac{N}{8}\leq i\leq \frac{3N}{8},\\
           1-\frac{\sigma\eps}{\alpha}\phi\left(2-\frac{4i}{N}\right),&   \frac{3N}{8}\leq i \leq \frac{N}{2},\\
           1+x_{i-N/2},& \frac{N}{2}\leq i\leq N.
        \end{cases}
  \end{equation}
  Let us denote the smallest mesh-width inside the layers by $h$. For this holds $h\leq \eps$.
  Note that associated with $\phi$ is the mesh characterising function $\psi=\e^\phi$, that classifies the convergence quality
  of the meshes by the quantity $\max|\psi'|:=\max\limits_{t\in[0,1/2]}|\psi'(t)|$. Two of the most common S-type meshes are
  the Shishkin mesh with
  \[
    \phi(t) = 2t\ln N,\quad
    \psi(t) = N^{-2t},\quad
    \max|\psi'| =2\ln N
  \]
  and the Bakhvalov-S-mesh
  \[
    \phi(t)=-\ln(1-2t(1-N^{-1})),\quad
    \psi(t)=1-2t(1-N^{-1}),\quad
    \max|\psi'|= 2.
  \]

  \subsubsection{Dur\'{a}n meshes}

  We will also use the recursively defined graded mesh that Dur\'{a}n and Lombardi introduced in \cite{Duran}, modified to our problem.
  Its advantages are the simple construction and generation of mesh points (without transition point(s))
  and some robustness property. Precisely, when we approximate a singularly perturbed problem with an a priori adapted mesh,
  it is natural to expect that a mesh designed for some value of the small parameter will also work well for larger values of it.
  In this regard the recursively graded meshes have better behavior in numerical experiments.

  In the construction of this mesh, we first define the points on the interval $[0,1]$, and than on the rest of the domain.
  Let $H\in(0,1)$ be arbitrary and define the number M from the conditions
  \begin{subequations}\label{M}
    \begin{equation}
      H\eps(1+H)^{M-2}<\frac{1}{2}\leq
      H\eps(1+H)^{M-1}
    \end{equation}
    which are equivalent to
    \begin{equation}
      M=\left\lceil1-\frac{\ln(2H\eps)}{\ln(1+H)}\right\rceil.
    \end{equation}
  \end{subequations}
  On the interval $[0,1]$ we define mesh points recursively in the following way:
  \begin{equation}\label{Dmesh1}
    x_i=\begin{cases}
          0, & i=0,\\
          H\eps,& i=1,\\
          (1+H)x_{i-1}, & 2\leq i\leq M-1,\\
          1/2,& i=M,\\
          1-x_{2M-i},& M+1\leq i\leq 2M-1,\\
          1, &i=2M.
        \end{cases}
  \end{equation}
  In case the interval $(x_{M-1}, 1/2)$ is too small compared to $(x_{M-2}, x_{M-1})$, we simply omit the mesh point $x_{M-1}$.
  In the rest of the domain, i.e. on the interval $[1, 2]$, the Dur\'{a}n mesh is given by:
  \begin{equation}\label{Dmesh2}
    x_{2M+i}=1+x_i,\,\,\,0\leq i\leq 2M.
  \end{equation}
  The total number of mesh subintervals is $N=4M$
  and depends on the parameters $H$ and conditions \eqref{M}, see \cite{Duran}. Moreover, the inequality
  \begin{equation}\label{H}
    H\lesssim N^{-1}\ln(1/\eps)
  \end{equation}
  applies.

  The mesh step sizes $h_i=x_i-x_{i-1}$, $1\leq i\leq 4M$, can be estimated with $CH\eps\leq h_i\leq H,$ and satisfy
  \begin{equation}\label{eq:step}
    \begin{array}{lll}
      h_i=H\eps, & i\in\{1,2M,2M+1,4M\},\\[0.5ex]
      h_i\leq Hx,& 2\leq i\leq M,\\ [0.5ex]
      h_i\leq H(1-x), & M+1 \leq i\leq 2M-1,\\[0.5ex]
      h_i\leq H(x-1), & 2M+2\leq i\leq 3M,\\ [0.5ex]
      h_i\leq H(2-x), & 3M+1\leq i\leq 4M-1,
    \end{array}
  \end{equation}

  \subsection{Weak formulation, norm and method}
  The analysis of finite element methods for singularly perturbed problems is usually conducted in the so called energy norm, the norm naturally associated
  with the bilinear form. In \cite{MS21} a weighted scalar product was used for a reaction-diffusion problem instead,
  and the corresponding weighted norm is stronger and captures the layer behaviour of the solution better, see Rem.~\ref{rem:balanced}.
  We will perform both analyses for the shift problem at the same time by considering a weight $\beta$,
  that fulfills the following conditions:
  \begin{subequations}\label{eq:beta}
    \begin{align}
      \beta(x)&\geq 1,\\
      \exists \pt_x\beta(x)\text{ with }|\pt_x\beta(x)|&\leq \frac{\alpha}{\eps}\beta(x)\text{ a.e. in }\Omega,\\
      \int \beta(x)\dx &\lesssim 1.
    \end{align}
  \end{subequations}
  The two examples for such weights we are interested in are
  \begin{itemize}
    \item $\beta_e(x)=1$, which gives raise to the classical analysis in unweighted $L^2$-spaces and the energy norm,
    \item $\beta_b(x)=1+\frac{1}{\eps}\left( \e^{-\alpha x/\eps}+\e^{-\alpha|x-1|/\eps}+\e^{-\alpha(2-x)/\eps} \right)$,
          which uses the structure of the layers and is similar to the weight given in \cite{MS21} for reaction diffusion problems.
  \end{itemize}

  Assuming $u,\,v\in H^1_0(\Omega)$, we obtain after multiplying \eqref{eq:stat} by $\beta$ the weighted bilinear form
  \begin{align}
    B_\beta(u,v)
      &:=\eps^2\scp{\pt_x u,\pt_x (\beta v)}_{\Omega}+\scp{au,v}_{\beta,\Omega}+\scp{bu(\cdot -1),v}_{\beta,(1,2)}\notag\\
      &= \scp{f,v}_{\beta,\Omega}-\scp{b\Phi(\cdot-1),v}_{\beta,(0,1)}=:F_\beta(v),\label{eq:weak}
  \end{align}
  where
  \[
    \scp{a,b}_{\beta,I}:=\int_I a(x)b(x)\beta(x)\dx.
  \]
  Associated to this weighted scalar product is the weighted $L^2$-norm
  \[
    \norm{a}{\beta}^2:=\scp{a,a}_{\beta,\Omega}
  \]
  and the weighted triple-norm
    \[
      \tnorm{v}^2_\beta:=\eps^2\norm{\pt_x v}{\beta}^2+\gamma\norm{v}{\beta}^2.
    \]
  \begin{lemma}\label{lem:coercivity}
    The bilinear form $B_\beta$ is coercive w.r.t. the weighted triple-norm, i.e.
    it holds for all $v\in H_0^1(\Omega)$
    \begin{align}
      B_\beta(v,v)\geq\frac{1}{2}\tnorm{v}^2_{\beta}.\label{eq:coer}
    \end{align}
  \end{lemma}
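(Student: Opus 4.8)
The plan is to compute $B_\beta(v,v)$ and split the diffusion term by the product rule $\pt_x(\beta v)=v\,\pt_x\beta+\beta\,\pt_x v$, which gives
\[
  B_\beta(v,v)=\eps^2\norm{\pt_x v}{\beta}^2+\eps^2\int_\Omega v\,\pt_x v\,\pt_x\beta\,\dx+\scp{av,v}_{\beta,\Omega}+\scp{bv(\cdot-1),v}_{\beta,(1,2)}.
\]
The first term is already the diffusion part of $\tnorm{v}_\beta^2$, so it only remains to bound the other three contributions from below while giving up at most half of $\eps^2\norm{\pt_x v}{\beta}^2$.

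For the cross term I would use the gradient bound $|\pt_x\beta|\le\frac{\alpha}{\eps}\beta$ from \eqref{eq:beta} and a weighted Young inequality with balance parameter equal to $1$:
\[
  \eps^2\left|\int_\Omega v\,\pt_x v\,\pt_x\beta\,\dx\right|\le\alpha\eps\int_\Omega\beta\,|v|\,|\pt_x v|\,\dx\le\frac{1}{2}\eps^2\norm{\pt_x v}{\beta}^2+\frac{\alpha^2}{2}\norm{v}{\beta}^2.
\]
The reaction term is immediate from $a\ge\alpha^2$ and $\beta\ge1$: $\scp{av,v}_{\beta,\Omega}\ge\alpha^2\norm{v}{\beta}^2$. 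For the shift term, Cauchy--Schwarz in the weighted inner product on $(1,2)$ together with $|b|\le\norm{b}{\infty}$ yields
\[
  \bigl|\scp{bv(\cdot-1),v}_{\beta,(1,2)}\bigr|\le\frac{\norm{b}{\infty}}{2}\left(\int_1^2 v(x-1)^2\beta(x)\,\dx+\int_1^2 v(x)^2\beta(x)\,\dx\right);
\]
the second integral is $\le\norm{v}{\beta}^2$, and the substitution $x\mapsto x+1$ turns the first into $\int_0^1 v(y)^2\beta(y+1)\,\mathrm{d}y$, which I would estimate by $\norm{v}{\beta}^2$ using that $\beta(\cdot+1)$ is controlled by $\beta$ on $(0,1)$ --- with equality for $\beta_e\equiv1$, and for $\beta_b$ by a short computation (on $(0,1)$ the ``$0$-layer'' and ``$1$-layer'' parts of $\beta_b(\cdot+1)$ and $\beta_b$ coincide, while the two remaining exponentials have argument in $(1,2)$, are therefore uniformly bounded, and are thus absorbed by $\beta_b\ge1$). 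Hence the shift term is $\ge-\frac{\norm{b}{\infty}}{2}\norm{v}{\beta}^2$.

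Adding the three estimates gives
\[
  B_\beta(v,v)\ge\frac{1}{2}\eps^2\norm{\pt_x v}{\beta}^2+\Bigl(\alpha^2-\frac{\alpha^2}{2}-\frac{\norm{b}{\infty}}{2}\Bigr)\norm{v}{\beta}^2=\frac{1}{2}\eps^2\norm{\pt_x v}{\beta}^2+\frac{1}{2}\bigl(\alpha^2-\norm{b}{\infty}\bigr)\norm{v}{\beta}^2\ge\frac{1}{2}\tnorm{v}_\beta^2,
\]
where the last step uses $\alpha^2-\norm{b}{\infty}\ge\gamma$ from \eqref{5}. \textbf{The main obstacle} is the interaction of the weight with the unit shift: a generic admissible weight need not satisfy $\beta(\cdot+1)\le\beta$ pointwise, so one has to exploit the explicit form of $\beta_b$ --- and the fact that its excess under the shift is uniformly bounded, hence harmless --- to keep the constant exactly $\frac{1}{2}$. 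Apart from this, the product rule, the Young estimate of the cross term, and the reaction bound are all routine.
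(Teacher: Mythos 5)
Your proof is correct and follows essentially the same route as the paper: expand $\pt_x(\beta v)$ by the product rule, absorb the cross term using $|\pt_x\beta|\leq\frac{\alpha}{\eps}\beta$ and Young's inequality with exactly the weights $\frac{\eps^2}{2}$ and $\frac{\alpha^2}{2}$, bound the reaction term by $a\geq\alpha^2$, and treat the shift term by Young plus the substitution $x\mapsto x+1$. The one place you go beyond the paper is the point you flag as the main obstacle: the paper silently writes $\int_1^2 v(x-1)^2\beta(x)\,\dx=\norm{v}{\beta,(0,1)}^2$, which is exact only for $\beta_e\equiv1$, whereas for $\beta_b$ one has $\beta_b(y+1)>\beta_b(y)$ on part of $(0,1)$, so your explicit matching of the layer terms of $\beta_b(\cdot+1)$ against those of $\beta_b$ is the right way to justify this step. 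Be aware, though, that the leftover exponential gives $\beta_b(y+1)\leq\beta_b(y)+\eps^{-1}\e^{-\alpha/\eps}$, i.e.\ a multiplicative factor $1+\eps^{-1}\e^{-\alpha/\eps}$ rather than $1$, so the constant $\frac12$ in front of $\gamma\norm{v}{\beta}^2$ is strictly recovered only for $\eps$ small enough (or after marginally shrinking $\gamma$) --- a blemish the paper's own proof shares but does not mention.
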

  \begin{proof}
    We have
    \[
      B(v,v)
        \geq \eps^2\norm{\pt_x v}{\beta}^2+\eps^2\scp{\pt_x v,(\pt_x \beta) v}_{\Omega}+ \alpha^2\norm{v}{\beta}^2+\scp{bv(\cdot-1),v}_{\beta,(1,2)}.
    \]
    With
    \begin{align*}
      |\eps^2\scp{\pt_x v,(\pt_x\beta) v}_{\Omega}|
        &\leq \eps^2\scp{|\pt_x v|,|\pt_x\beta||v|}_{\Omega}
         \leq \eps\alpha\scp{|\pt_x v|,|v|}_{\beta,\Omega}
         \leq \frac{\eps^2}{2}\norm{\pt_x v}{\beta}^2+\frac{\alpha^2}{2}\norm{v}{\beta}^2,\\
      |\scp{bv(\cdot -1),v}_{\beta,(1,2)}|
        &\leq \frac{\norm{b}{\infty}}{2}\left( \norm{v}{\beta,(0,1)}^2+\norm{v}{\beta,(1,2)}^2 \right)
         = \frac{\norm{b}{\infty}}{2}\norm{v}{\beta}^2
    \end{align*}
    we finish the proof by
    \[
      B(v,v)
        \geq \frac{1}{2}\eps^2\norm{\pt_x v}{\beta}^2 + \frac{1}{2}\alpha^2\norm{v}{\beta}^2 - \frac{\norm{b}{\infty}}{2}\norm{v}{\beta}^2
        \geq \frac{1}{2}\tnorm{v}^2_\beta.
    \]
  \end{proof}
  \begin{remark}\label{rem:balanced}
    Let us consider the norm $\tnorm{\cdot}_\beta$
    for the smooth part $s$
    and
    for a typical layer function $w(x)=\exp(-\alpha x/\eps)$,
    and the two choices of $\beta$ presented above. For $\beta=\beta_e=1$ we have
    \[
      \tnorm{s}_{\beta_e} \sim 1,\quad
      \tnorm{w}_{\beta_e} \sim \eps^{1/2} \stackrel{\eps\to 0}{\longrightarrow}0.
    \]
    Thus, this so called energy norm does not see the layer function for small $\eps$ -- it is unbalanced.
    Over the last years many researchers
    considered convergence in a balanced norm, where the norm of boundary layers does not vanish for $\eps\to 0$.
    Examples are \cite{LS11,RSch15,FrR19,AMM19,MS21}. Following \cite{MS21} the second given weight function $\beta_b$ is constructed.
    Here it holds
    \[
      \tnorm{s}_{\beta_b} \sim 1,\quad
      \tnorm{w}_{\beta_b} \sim 1
    \]
    independent of $\eps$ -- the norm is balanced.
  \end{remark}

  As discrete space we use the piecewise polynomial space
  \[
    \U_h:=\{v\in H_0^1(\Omega)\colon v|_K\in\PS_k(K)\,\forall K\in\Omega_h\},
  \]
  where $\PS_k(K)$ denotes the space of polynomials of degree at most $k$ on the cell $K\in \Omega_h$.
  Our discrete method now reads: Find $u_h\in\U_h$ s.t. for all $v\in\U_h$ it holds
  \[
    B_\beta(u_h,v)=F_\beta(v).
  \]
  As a consequence of this and \eqref{eq:weak} we have Galerkin orthogonality
  \begin{align}
    B_\beta(u-u_h,v)=0\quad\text{for all }v\in\U_h.\label{eq:orth}
  \end{align}

  \subsection{Error estimation}
  In order to analyse the method we split the error into a discrete error and an interpolation error.
  For this purpose, let $I:C(\Omega)\to\U_h$ denote the piecewise Lagrange interpolation into $\U_h$
  using locally equidistantly distributed points (any other reasonable choice of functionals also suffices).
  Then we set
  \[
    u-u_h=\eta-\xi,\,\text{where }\eta:=u-Iu\text{ and }\xi=u_h-Iu\in\U_h
  \]
  and it follows
  \[
    \tnorm{u-u_h}_\beta\leq\tnorm{\eta}_\beta+\tnorm{\xi}_\beta.
  \]
  By \eqref{eq:coer} and \eqref{eq:orth} we obtain
  \begin{align}
    \frac{1}{2}\tnorm{\xi}_\beta^2
      &\leq B_\beta(\xi,\xi)
        = B_\beta(\eta,\xi)\notag\\
      &\leq \eps^2\norm{\pt_x\eta}{\beta}\norm{\pt_x\xi}{\beta}
            +\eps\norm{\pt_x\eta}{\beta}\norm{\xi}{\beta}
            +\norm{a}{\infty}\norm{\eta}{\beta}\norm{\xi}{\beta}
            +\norm{b}{\infty}\norm{\eta}{\beta}\norm{\xi}{\beta}\label{eq:bound}\\
      &\leq \left(1+\frac{2}{\gamma} \right)\eps^2\norm{\pt_x\eta}{\beta}^2
           +\frac{2(\norm{a}{\infty}+\norm{b}{\infty})^2}{\gamma}\norm{\eta}{\beta}^2
           +\frac{1}{4}\tnorm{\xi}_\beta^2. \notag
  \end{align}
  Thus we have
  \[
    \tnorm{\xi}_\beta
    \lesssim \tnorm{\eta}_\beta
  \]
  and therefore
  \begin{equation}\label{eq:conv}
    \tnorm{u-u_h}_\beta\lesssim \tnorm{u-Iu}_\beta.
  \end{equation}
  In other words, the method is quasi-optimal. The discretization error is smaller than the interpolation error, so it is enough to estimate the interpolation error to estimate the error between the correct and the approximate solution.

  \subsubsection{The interpolation error on S-Type meshes}
  On an S-type mesh we have for $\sigma\geq k+1$, see \cite{Zarin14}
  \begin{subequations}\label{eq:inter:S}
    \begin{align}
      \norm{u-Iu}{\beta_e}&\lesssim (h+N^{-1}\max|\psi'|)^{k+1},\\
      \tnorm{u-Iu}_{\beta_e}&\lesssim (\eps^{1/2}+N^{-1})(h+N^{-1}\max|\psi'|)^k.
    \end{align}
  \end{subequations}
  Note that the additional factor of $\eps^{1/2}$ in the triple norm indicates that the norm is not balanced
  and we obtain a higher convergence order than to be expected by a using elements of degree $k$.
  For the second choice of $\beta$ we find in \cite{MS21} an interpolation error result on a
  Shishkin mesh and (bi-)linear elements for a reaction-diffusion problem.
  We extend the analysis here for higher order elements and S-type meshes, following their proof
  as the layer structure is similar.
  \begin{lemma}\label{lem:main:S}
    For the standard piecewise Lagrange interpolation operator $I$ on an S-type mesh with
    $\sigma\geq k+1$ we obtain
    \begin{align}\label{eq:inter:TS2}
      \tnorm{u-Iu}_{\beta_b}\lesssim \eps h^k+(N^{-1}\max|\psi'|)^k.
    \end{align}
  \end{lemma}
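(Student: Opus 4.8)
The plan is to estimate $\tnorm{u-Iu}_{\beta_b}$ by splitting $u$ according to the decomposition \eqref{sd}, $u=s+w_1+w_2+w_3$, and to bound the interpolation error of each component separately in the two parts of the triple norm, namely $\eps\norm{\pt_x(u-Iu)}{\beta_b}$ and $\gamma^{1/2}\norm{u-Iu}{\beta_b}$. The weight $\beta_b$ splits naturally as $\beta_b = 1 + \frac1\eps\sum_{j}e_j$, where $e_1 = \e^{-\alpha x/\eps}$, $e_2 = \e^{-\alpha|x-1|/\eps}$, $e_3 = \e^{-\alpha(2-x)/\eps}$, so that any weighted $L^2$-norm decomposes into an unweighted piece plus three exponentially weighted pieces. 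For the unweighted piece one simply invokes the known energy-norm estimates \eqref{eq:inter:S} (with $\beta_e = 1$), which already contribute terms of the required form $\eps h^k + (N^{-1}\max|\psi'|)^k$ (note $\eps^{1/2}\cdot(\cdots)^k \le \eps h^k + (\cdots)^k$ after using $h\le\eps$ appropriately, or more directly the smooth-part and layer-part analyses below). So the real work is in the three exponentially weighted contributions.

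First I would handle the smooth part $s$. On any cell $K$ of width $h_K$ we have the standard local interpolation bounds $\norm{s-Is}{L^\infty(K)}\lesssim h_K^{k+1}\norm{\pt_x^{k+1}s}{L^\infty(K)}\lesssim h_K^{k+1}$ and $\norm{\pt_x(s-Is)}{L^\infty(K)}\lesssim h_K^{k}$, using \eqref{s}. Since $\int\beta_b\dx\lesssim 1$ by \eqref{eq:beta}(c), the weighted $L^2$-norms are controlled by the $L^\infty$-norms times a constant; in the layer region $h_K\lesssim \eps\max|\psi'|/(\sigma N)\lesssim \eps N^{-1}\max|\psi'|$ and in the coarse region $h_K\lesssim N^{-1}$, so the contribution of $s$ to $\eps\norm{\pt_x(s-Is)}{\beta_b}$ is $\lesssim \eps\bigl(h^k + (N^{-1}\max|\psi'|)^k\bigr)$ and to $\norm{s-Is}{\beta_b}$ is even smaller (an extra power of $h$), both within the claimed bound.

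Next come the layer functions $w_1,w_2,w_3$; by symmetry it suffices to treat $w:=w_1$ with $|\pt_x^\ell w(x)|\lesssim \eps^{-\ell}\e^{-\alpha x/\eps}$. Here I distinguish the fine subdomain $[0,\lambda]$ near $x=0$ and the remainder $[\lambda,2]$. In the fine subdomain, on each cell I use $\norm{\pt_x(w-Iw)}{L^\infty(K)}\lesssim h_K^k\norm{\pt_x^{k+1}w}{L^\infty(K)}\lesssim h_K^k\eps^{-(k+1)}\e^{-\alpha x_{K}^-/\eps}$; multiplying by $\eps$ and by the weight $\beta_b\lesssim \eps^{-1}\e^{-\alpha x/\eps}$ on this region and integrating — carefully exploiting $h_K\lesssim \eps\max|\psi'|/(\sigma N)$ together with $\sigma\ge k+1$ so that the factor $\eps^{-k}$ is absorbed — yields a contribution $\lesssim \eps h^k + (N^{-1}\max|\psi'|)^k$; the $\beta_b$-weighted $L^2$ term is handled the same way with an extra $h_K$. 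In the coarse subdomain $[\lambda,2]$, the function $w$ and all relevant derivatives are pointwise $\lesssim \eps^{-(k+1)}N^{-\sigma}\le \eps^{-(k+1)}N^{-(k+1)}$ (using $\e^{-\alpha\lambda/\eps}=N^{-\sigma}$), which, combined with mesh widths $\lesssim N^{-1}$ and $\int\beta_b\lesssim 1$, is negligible against $(N^{-1}\max|\psi'|)^k$. This is the standard S-mesh layer argument, identical in spirit to \cite{Zarin14,MS21}; I would follow their computations closely.

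The main obstacle is the bookkeeping in the fine layer region: one must track the interplay between the cell-wise factors $\e^{-\alpha x/\eps}$ coming from derivatives of $w$, the matching factor $\e^{-\alpha x/\eps}$ from the weight $\beta_b$ (which roughly squares the exponential decay, hence improves convergence), and the $\eps$-dependent mesh widths, and verify that the accumulated sum over cells telescopes/geometrically decays so that no spurious power of $\eps^{-1}$ or of $N$ survives — exactly the place where the condition $\sigma\ge k+1$ enters. A secondary subtlety is the transition cell containing $x=\lambda$ and the interaction between the weight centered at $x=1$ (the $e_2$ term) and the mesh near $x=0$: there $e_2(x)=\e^{-\alpha(1-x)/\eps}$ is exponentially small and contributes nothing, so one checks each of the three weight terms against each of the four layer regions and discards the off-diagonal combinations. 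Once these estimates are assembled and added, \eqref{eq:inter:TS2} follows.
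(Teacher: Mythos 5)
Your overall strategy is the same as the paper's: the paper's entire proof consists of the observation that $\int\beta_b\,\dx\lesssim 1$ reduces the weighted $L^2$-norms to $L^\infty$-norms, i.e.\ $\norm{v}{\beta_b}^2\leq\norm{v}{L^\infty(\Omega)}^2\int\beta_b\lesssim\norm{v}{L^\infty(\Omega)}^2$, followed by a citation of the classical $L^\infty$ interpolation bounds $\norm{u-Iu}{L^\infty(K)}\lesssim(h+N^{-1}\max|\psi'|)^{k+1}$ and $\norm{\pt_x(u-Iu)}{L^\infty(K)}\lesssim(h+N^{-1})^k+\eps^{-1}(N^{-1}\max|\psi'|)^k$ obtained from the solution decomposition. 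You use exactly this reduction for the smooth part and then redo the layer bookkeeping by hand with a cell-by-cell weighted integration, which is more work than the paper does but is consistent with it (and in the fine region even gives a slightly sharper constant because the weight's exponential reinforces the decay of $w$).

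The one step that would fail as literally written is your treatment of the layer functions on the coarse subdomain $[\lambda,2]$. You bound ``$w$ and all relevant derivatives'' by $\eps^{-(k+1)}N^{-\sigma}$ and claim that, combined with mesh widths $\lesssim N^{-1}$, the contribution is negligible. But if you feed $|\pt_x^{k+1}w|\lesssim\eps^{-(k+1)}N^{-\sigma}$ into the derivative-based local interpolation estimate you obtain $h_K^{k}\,\eps^{-(k+1)}N^{-\sigma}\lesssim N^{-k}\eps^{-(k+1)}N^{-(k+1)}$, and even after multiplying by $\eps$ an uncompensated factor $\eps^{-k}$ survives; for $\eps\ll N^{-1}$ this is not dominated by $(N^{-1}\max|\psi'|)^k$. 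The standard (and necessary) argument in the coarse region does not use high derivatives of $w$ at all: one writes $\norm{w-Iw}{L^\infty(K)}\leq\norm{w}{L^\infty(K)}+\norm{Iw}{L^\infty(K)}\lesssim\norm{w}{L^\infty(K)}\lesssim N^{-\sigma}$ by $L^\infty$-stability of the Lagrange interpolant, and for the derivative combines $\norm{\pt_x w}{L^\infty(K)}\lesssim\eps^{-1}N^{-\sigma}$ with an inverse inequality $\norm{\pt_x Iw}{L^\infty(K)}\lesssim h_K^{-1}\norm{w}{L^\infty(K)}$, after which $\sigma\geq k+1$ closes the estimate. Since you state that you would follow the computations of Zarin and Madden--Stynes closely, this is a fixable slip rather than a wrong approach, but as presented the coarse-region claim does not follow from the bounds you invoke.
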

  \begin{proof}
    Proving interpolation error results in the weighted norm can be done by using the
    assumptions on $\beta$ and the $L^\infty$-norm interpolation error bounds. We have
    \begin{align*}
      \norm{u-Iu}{\beta_b}^2
        &=\sum_{K\in\Omega_h}\int_K(u-Iu)^2\beta_b
        \leq\norm{u-Iu}{L^\infty(\Omega)}^2\int\beta_b
        \lesssim \norm{u-Iu}{L^\infty(\Omega)}^2\\
      \norm{\pt_x(u-Iu)}{\beta_b}^2
        &\lesssim \norm{\pt_x(u-Iu)}{L^\infty(\Omega)}^2,
    \end{align*}
    where $K\in\Omega_h$ denotes the intervals in the mesh $\Omega_h$ over $\Omega$.
    Now the interpolation error in the $L^\infty$-norm follows in the classical way of applying
    local interpolation error estimates, 
    the solution decomposition and results for $\sigma\geq k+1$ in
    \begin{subequations}\label{eq:inter:S2}
      \begin{align}
        \norm{u-Iu}{L^\infty(K)}&\lesssim  (h+N^{-1}\max|\psi'|)^{k+1},\\
        \norm{\pt_x(u-Iu)}{L^\infty(K)}&\lesssim ((h+N^{-1})^k+\eps^{-1}(N^{-1}\max|\psi'|)^k).
      \end{align}
    \end{subequations}
    Combining the individual results finishes the proof.
  \end{proof}

  \subsubsection{The interpolation error on Dur\'{a}n meshes}

  \begin{lemma}
    For the standard piecewise Lagrange interpolation operator on the graded mesh \eqref{Dmesh1}, \eqref{Dmesh2} we have
    \begin{subequations}\label{eq:inter:D}
      \begin{align}
        \norm{u-Iu}{\beta_e}
          & \lesssim H^{k+1}
            \lesssim N^{-(k+1)}(\ln{(1/{\eps})})^{k+1},\label{eq:inter:D1}\\[0.5ex]
        \tnorm{u-Iu}_{\beta_e}
          & \lesssim (\sqrt{\eps}+H)H^k
            \lesssim (\sqrt{\eps}+N^{-1}\ln(1/\eps))N^{-k}(\ln(1/\eps))^k,\label{eq:inter:D2}\\[0.5ex]
        \norm{u-Iu}{\beta_b}
          & \lesssim H^{k+1}
            \lesssim N^{-(k+1)}(\ln{(1/{\eps})})^{k+1},\label{eq:inter:D3}\\[0.5ex]
        \tnorm{u-Iu}_{\beta_b}
          & \lesssim H^k
            \lesssim N^{-k}(\ln{(1/{\eps})})^k.\label{eq:inter:D4}
      \end{align}
    \end{subequations}
  \end{lemma}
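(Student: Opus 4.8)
The idea is to reduce all four estimates to classical $L^\infty$- and local $L^2$-interpolation bounds for the pieces of the decomposition \eqref{sd}, exploiting the grading \eqref{eq:step}, the weight properties \eqref{eq:beta} and the relation \eqref{H}. First I would record, on a cell $K$ of width $h_K$, the standard local bounds $\norm{g-Ig}{L^\infty(K)}\lesssim h_K^{k+1}\norm{g^{(k+1)}}{L^\infty(K)}$, $\norm{(g-Ig)'}{L^\infty(K)}\lesssim h_K^{k}\norm{g^{(k+1)}}{L^\infty(K)}$ and the $L^2$-refinement $\norm{(g-Ig)'}{L^2(K)}^2\lesssim h_K^{2k+1}\norm{g^{(k+1)}}{L^\infty(K)}^2$. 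For the smooth part $s$, \eqref{s} together with $h_K\le H$ gives $\norm{s-Is}{L^\infty(\Omega)}\lesssim H^{k+1}$, $\norm{(s-Is)'}{L^\infty(\Omega)}\lesssim H^{k}$, and, since $\sum_K h_K=|\Omega|$, also $\eps^2\norm{(s-Is)'}{L^2(\Omega)}^2\lesssim\eps^2H^{2k}$.

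Next I would treat the layer function $w_1$, with $|w_1^{(\ell)}(x)|\lesssim\eps^{-\ell}\e^{-\alpha x/\eps}$, and split $\Omega$ into $[0,1/2]$, where the Dur\'an mesh is graded towards $x=0$, and $[1/2,2]$, where $w_1$ and all its derivatives are exponentially small in $1/\eps$ and hence — since exponential smallness beats every power of $N$ and $1/\eps$, using $H\lesssim N^{-1}\ln(1/\eps)$ — contribute negligibly across these $\mathcal{O}(N)$ cells. On $[0,1/2]$ I use $h_1=H\eps$ and $h_i=Hx_{i-1}$ for $i\ge 2$; writing $s_i:=x_{i-1}/\eps$, so that $h_i=H\eps s_i$ and $\norm{w_1^{(k+1)}}{L^\infty(K_i)}\lesssim\eps^{-(k+1)}\e^{-\alpha s_i}$, the local bounds give $\norm{w_1-Iw_1}{L^\infty(K_i)}\lesssim H^{k+1}s_i^{k+1}\e^{-\alpha s_i}\lesssim H^{k+1}$ and $\norm{(w_1-Iw_1)'}{L^\infty(K_i)}\lesssim\eps^{-1}H^{k}s_i^{k}\e^{-\alpha s_i}\lesssim\eps^{-1}H^{k}$, because $s\mapsto s^{m}\e^{-\alpha s}$ is bounded on $[0,\infty)$. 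By \eqref{Dmesh1}--\eqref{Dmesh2} the mesh near $x=1$ and near $x=2$ is a reflected/shifted copy of the one near $x=0$, so the same bounds hold for $w_2,w_3$ relative to their layers (treating the two sides of $x=1$ separately for $w_2$), while off their own layers they are again exponentially small. Summing over the pieces yields the global estimates $\norm{u-Iu}{L^\infty(\Omega)}\lesssim H^{k+1}$ and $\norm{(u-Iu)'}{L^\infty(\Omega)}\lesssim\eps^{-1}H^{k}$.

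From these the $\beta_b$-estimates \eqref{eq:inter:D3}, \eqref{eq:inter:D4} follow at once: by $\int\beta_b\lesssim 1$ one has $\norm{u-Iu}{\beta_b}^2\le\norm{u-Iu}{L^\infty(\Omega)}^2\int\beta_b\lesssim H^{2k+2}$ and $\eps^2\norm{\pt_x(u-Iu)}{\beta_b}^2\le\eps^2\norm{(u-Iu)'}{L^\infty(\Omega)}^2\int\beta_b\lesssim H^{2k}$, hence $\tnorm{u-Iu}_{\beta_b}\lesssim H^{k}$; the same crude argument with $\beta_e\equiv 1$ over the bounded domain $\Omega$ gives \eqref{eq:inter:D1}, and its $\gamma\norm{\cdot}{\beta_e}^2$ contribution to \eqref{eq:inter:D2} is again $\lesssim H^{2k+2}$. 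The only genuinely delicate point is therefore the term $\eps^2\norm{\pt_x(u-Iu)}{\beta_e}^2$ in \eqref{eq:inter:D2}, where the crude $L^\infty$ bound only yields $H^{2k}$ rather than the required $\eps H^{2k}$, so a cell-by-cell argument is needed.

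For that I return to the graded layer cells: using the $L^2$-refinement and $h_i=H\eps s_i$,
\[
  \eps^2\sum_i\norm{(w_1-Iw_1)'}{L^2(K_i)}^2
    \lesssim\eps^2\sum_i(H\eps s_i)^{2k+1}\eps^{-2k-2}\e^{-2\alpha s_i}
    =\eps H^{2k+1}\sum_i s_i^{2k+1}\e^{-2\alpha s_i},
\]
and since $s_{i+1}=(1+H)s_i$, i.e.\ $s_{i+1}-s_i=Hs_i$, the last sum is $H^{-1}$ times a Riemann sum on geometrically spaced nodes for $\int_0^\infty s^{2k}\e^{-2\alpha s}\,\mathrm{d}s$, whence $\sum_i s_i^{2k+1}\e^{-2\alpha s_i}\lesssim H^{-1}$; with the first cell contributing only $\eps H^{2k+1}\le\eps H^{2k}$ this gives $\eps^2\norm{(w_1-Iw_1)'}{L^2(\Omega)}^2\lesssim\eps H^{2k}$, and similarly for $w_2,w_3$. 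Adding the smooth part's $\eps^2H^{2k}$ gives $\eps\norm{\pt_x(u-Iu)}{\beta_e}\lesssim\sqrt{\eps}\,H^{k}$, hence $\tnorm{u-Iu}_{\beta_e}\lesssim(\sqrt{\eps}+H)H^{k}$; the second inequalities in \eqref{eq:inter:D1}--\eqref{eq:inter:D4} then follow from \eqref{H}. I expect the main obstacle to be precisely this geometric sum $\sum_i s_i^{2k+1}\e^{-2\alpha s_i}\lesssim H^{-1}$, which removes the spurious logarithmic factor and produces the sharp $\sqrt{\eps}$ in \eqref{eq:inter:D2}; the remainder is bookkeeping over the three layers and the smooth part.
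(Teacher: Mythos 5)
Your proposal is correct and follows essentially the same route as the paper: split via the decomposition \eqref{sd}, use local Lagrange interpolation bounds together with the grading $h_i\lesssim Hx$ to get $\norm{u-Iu}{L^\infty}\lesssim H^{k+1}$, $\norm{\pt_x(u-Iu)}{L^\infty}\lesssim \eps^{-1}H^k$ and the sharper $\eps^2\norm{\pt_x(w-Iw)}{L^2}^2\lesssim \eps H^{2k}$, and then invoke $\int\beta_b\lesssim 1$ for the weighted bounds. The only cosmetic difference is that you control the critical layer contribution by a discrete geometric Riemann sum $\sum_i s_i^{2k+1}\e^{-2\alpha s_i}\lesssim H^{-1}$, whereas the paper absorbs the cell-wise bounds into the integral $\int_0^{1/2}\bigl(x^k\eps^{-(k+1)}\e^{-\alpha x/\eps}\bigr)^p\dx$ and substitutes $\hat x=x/\eps$ (working in $L^p$ and letting $p\to\infty$ for the sup-norm bounds) — the two computations are equivalent.
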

  \begin{proof}
    In the proof we use the norm definitions, the solution decomposition \eqref{sd}
    with bounds \eqref{s},\eqref{exp} and standard interpolation error estimates.
    Let us start with the unweighted norm on the interval $[0,1/2]$ and, using \eqref{eq:step}, obtain
    \begin{align*}
      \|s-Is\|_{L^2(0,1/2)}^2
        =&   \sum\limits_{i=1}^M\|s-Is\|_{L^2(I_i)}^2\\
        &\lesssim h_1^{2(k+1)}\|s^{(k+1)}\|_{L^2(I_1)}^2
          +\sum\limits_{i=2}^M H^{2(k+1)}\|x^{k+1}s^{(k+1)}\|_{L^2(I_i)}^2\\
        &\lesssim H^{2(k+1)}
    \end{align*}
    where $I_i=(x_{i-1},x_i).$ 
For the layer component $w_1$ we get 
\begin{align*}
      \|w_1-Iw_1\|_{L^2(0,1/2)}^2
        =&   \sum\limits_{i=1}^M\|w_1-Iw_1\|_{L^2(I_i)}^2\\
        &\lesssim h_1^{2(k+1)}\|w_1^{(k+1)}\|_{L^2(I_1)}^2
          +\sum\limits_{i=2}^M H^{2(k+1)}\|x^{k+1}w_1^{(k+1)}\|_{L^2(I_i)}^2\\
         & \lesssim (H\eps)^{2(k+1)}\int\limits_0^{H\eps}\eps^{-2(k+1)}e^{-\frac{2\alpha x}{\eps}}dx+H^{2(k+1)}\int\limits_{H\eps}^{1/2}x^{2(k+1)}\eps^{-2(k+1)}e^{-\frac{2\alpha x}{\eps}}dx\\
        &\lesssim \eps H^{2(k+1)}.
    \end{align*}   
    The same estimates we obtained for other layer components, $w_2$ and $w_3$,
    and similarly these results are valid on whole domain $[0,2]$. Thus, we conclude $\norm{u-Iu}{\beta_e}\lesssim H^{k+1}$.
    Analogously, the other estimates for the $H^1$-seminorm error $|s-Is|_{H^1(\Omega)}\lesssim H^k$ and $|w-Iw|_{H^1(\Omega)}\lesssim \eps^{-\frac{1}{2}}H^k,$ follow, needed for \eqref{eq:inter:D2}.
    So, we get $$\tnorm{u-Iu}_{\beta_e}\lesssim   (\sqrt{\eps}+H)H^k,$$ and with \eqref{H} we have \eqref{eq:inter:D2}.

    To get the estimates in the weighted $\beta_b$ norm, we start with $p\in(1,\infty)$ and
    \[
      \|u-Iu\|_{L^p(\Omega)}\leq  \|s-Is\|_{L^p(\Omega)}+ \|w-Iw\|_{L^p(\Omega)}.
    \]
    It follows like before $ \|s-Is\|_{L^p(\Omega)} \lesssim H^{k+1}$ and
    \begin{align*}
      \|w-Iw\|_{L^p(0,1/2)}^p
        &\lesssim H^{p(k+1)}\int\limits_{0}^{1/2}\Big(x^{k+1}\eps^{-(k+1)}e^{-\frac{\alpha x}{\eps}}\Big)^p dx\\
        &\lesssim \eps H^{p(k+1)}\int\limits_{0}^{1/(2\eps)}\Big(\hat x^{k+1}e^{-\alpha \hat x}\Big)^pd\hat x
         \lesssim \eps H^{p(k+1)},
    \end{align*}
    where a substitution $\hat x=x/\eps$ was used.
    With similar results in the remaining domain we arrive at
    \[
      \|u-Iu\|_{L^p(\Omega)}\lesssim H^{k+1}.
    \]
    For $p=\infty$ we have directly
    \[\|u-Iu\|_{L^\infty(I_i)}\leq C|I_i|^{k+1}|u|_{W^{k+1,\infty}(I_i)}\leq C\eps^{-(k+1)}H^{k+1}\|x^{k+1}e^{-\alpha  x/\eps}\|_{L^{\infty}(I_i)},\] so 
    \begin{align}\label{eq:1}
    \|u-Iu\|_{L^\infty(0,1/2)}&\leq  C\eps^{-(k+1)}H^{k+1}\|x^{k+1}e^{-\alpha  x/\eps}\|_{L^{\infty}(0,1/2)}\\ \nonumber
   & \leq  C\eps^{-(k+1)}H^{k+1}((k+1)\eps/\alpha)^{k+1}e^{-(k+1)}=CH^{k+1}\lesssim H^{k+1}.
    \end{align}

    Similarly, it is valid that $\|\partial_x (s-Is)\|_{L^p(\Omega)}\lesssim H^k$ and
    \begin{align*}
      \|\partial_x (w-Iw)\|_{L^p(0,1/2)}^p
        &\lesssim H^{pk}\int\limits_{0}^{1/2}\Big(x^k\eps^{-(k+1)}e^{-\frac{\alpha x}{\eps}}\Big)^p dx\\
        &\lesssim \eps H^{pk}\int\limits_{0}^{1/(2\eps)}\Big(\hat x^k\eps^{-1}e^{-\alpha\hat x}\Big)^pd\hat x
         \lesssim \eps^{1-p}H^{pk}
    \end{align*}
    and analogously in the remaining domain.
    For $p\rightarrow \infty$ we get
    \begin{equation}\label{eq:2}
      \|\partial_x(u-Iu)\|_{L^\infty(\Omega)}\lesssim \eps^{-1}H^{k}.
    \end{equation}
    Now, estimates \eqref{eq:inter:D3} and \eqref{eq:inter:D4} follow from \eqref{eq:1} and \eqref{eq:2} with \eqref{H}.
  \end{proof}


  \section{The full discretisation}\label{sec:full}
  Having the results for the stationary problem, we address now the time-dependent one.
  We have to look at the solution decomposition,
  the definition of the meshes and finally at the error analysis.
  Let us start with the solution decomposition.

  We assume basically the same structure as in the stationary case:
  \begin{subequations}\label{eq:decomp:time}
  \begin{equation}
    u=s+w_1+w_2+w_3,
  \end{equation}
  where for a given $k,q>0$ and all $\ell\in\{0,\dots,k+1\}$, $r\in\{0,\dots,q+1\}$ it holds
  \begin{align}
    \norm{\pt_t^r\pt_x^\ell s}{L^\infty(0,2)}&\lesssim 1,\\
    |\pt_t^r\pt_x^\ell w_1(x,t)|&\lesssim \eps^{-\ell}\e^{-\frac{\alpha x}{\eps}},&
    |\pt_t^r\pt_x^\ell w_2(x,t)|&\lesssim \eps^{-\ell}\e^{-\frac{\alpha |x-1|}{\eps}},&
    |\pt_t^r\pt_x^\ell w_3(x,t)|&\lesssim \eps^{-\ell}\e^{-\frac{\alpha (2-x)}{\eps}}.
  \end{align}
  \end{subequations}
  Thus, we assume the time-derivatives not to influence the upper bounds. This might not always be the case
  and depends on compatibility conditions on the data, see also \cite{CJLS98} for an analysis on reaction-diffusion problems.

  The temporal mesh is now given by dividing $[0,T]$ into $M$ cells $0=t_0<t_1<\dots<t_M=T$
  with individual width $\tau_m=t_m-t_{m-1}$ and maximal width $\tau:=\max\{\tau_m\}$. As spatial mesh we use the same ones
  described in the stationary case.

  We want to apply a discontinuous Galerkin method in time and a $\beta$-weighted continuous Galerkin FEM in space.
  Therefore, the discrete space is defined as
  \[
    \U_h^\tau:=\{U\in L^2(0,T;H^1_0(\Omega)):U|_{(t_{m-1},t_m)}\in\PS_q((t_{m-1},t_m);\U_h)\forall m\in\{1,\dots,M\}\}.
  \]
  Our method now reads: Find $U_h^\tau\in\U_h^\tau$ such that  for each $m\in\{1,\dots,M\}$ and $V\in\U_h^\tau$ it holds
  \[
    \Qm{\scp{\pt_t U_h^\tau,V}_{\beta,\Omega}}
    +\Qm{B_\beta(U_h^\tau,V)}
    +\scp{\jump{U_h^\tau}_{m-1},V^+_{m-1}}_{\beta,\Omega}
    = \Qm{F(V)},
  \]
  where

  \begin{equation}\label{Qm}
    \Qm{v} := \frac{\tau_m}{2} \sum_{i=0}^q \hat{\omega}_i v(\tmi)
  \end{equation}
  with the transformed Gau\ss--Radau points $\tmi := T_m(\hat{t}_i)$, $i\in\{0,\dots,q\}$
  and the Gauß-Radau weights $\hat{\omega}_i$,
  is a quadrature formula on $(t_{m-1},t_m]$ which is exact for polynomials of degree at most $2q$
  and
  \[
    v_m^{\pm} := \lim\limits_{t\to t_m\pm 0} v(t),\qquad
    \jump{v}_m := v_m^{+} - v_m^{-}
  \]
  are the one-sided limits and the jump at $t=t_m$, resp. Note that $U_0^{-}\in \U_h$ is a suitable
  approximation of the initial condition~$u_0$.

  Using this setup, we can immediately apply the results of \cite{FrM16} after checking the assumptions given therein.
  \begin{itemize}
    \item Assumption 1
          \[
            \norm{v}{\beta}\lesssim \tnorm{v}_\beta
          \]
          for all $v\in H^1_0(\Omega)$ is true due to 
          the definition of the weighted triple norm.
    \item Assumption 2
          \begin{align*}
            \norm{v-Iv}{\beta} &\lesssim g_{L^2}(N),&
            \tnorm{v-Iv}_\beta &\lesssim g_e(N)
          \end{align*}
          is true for all $v\in H^1_0(\Omega)\cap H^{k+1}(\Omega)$ with
          \begin{equation}\label{gS}
            g_{L^2}(N)=(h+N^{-1}\max|\psi'|)^{k+1},\quad
            g_{e}(N)=(h+N^{-1}\max|\psi'|)^{k}
          \end{equation}
          on an S-type mesh, see \eqref{eq:inter:S} -- \eqref{eq:inter:S2}, and
          \begin{equation}\label{gD}
            g_{L^2}(N)=N^{-(k+1)}(\ln(1/\eps))^{k+1},\quad
            g_{e}(N)=N^{-k}(\ln(1/\eps))^k
          \end{equation}
          on a Dur\'{a}n mesh, see \eqref{eq:inter:D}.
    \item Assumption 3
          \begin{align*}
            B_\beta(v,v) &\geq c\tnorm{v}_\beta^2,&
            B_\beta(\eta,v_N) &\lesssim g_d(N)\tnorm{v_N}_\beta
          \end{align*}
          holds true for all $v\in H^1_0(\Omega)$ and $v_N\in \U_h$ by \eqref{eq:coer} with $c=\frac{1}{2}$ and $g_d(N)=g_e(N)$, see \eqref{eq:bound}.
    \item Assumption 4 is true, because no stabilisation method is used.
    \item Assumption 5 deals with the time behaviour
          \[
            \norm{\pt_t^s\pt_x^k u}{L^2((0,T)\times\Omega)}\lesssim \norm{\pt_x^k u}{L^2((0,T)\times\Omega)}
          \]
          and holds true by our assumption on the solution decomposition \eqref{eq:decomp:time}.
  \end{itemize}
  Thus, \cite[Theorems 3.7, 3.9]{FrM16} give the main result of this paper.
  \begin{theorem}\label{thm:main}
    Under suitable regularity assumptions to enable the solution decomposition and using (\ref{Qm})-(\ref{gD}) we have on an S-type and Dur\'{a}n mesh
    in space 
    \[
      \sup_{t\in(0,T)}\norm{(u-U_h^\tau)(t)}{\beta}+
      \tnorm{u-U_h^\tau}_{Q,\beta}
      \lesssim T(\tau^{q+1}+ g_e(N)),
    \]
    where
    \[
      \tnorm{v}_{Q,\beta}^2
        := \sum_{m=1}^M \Qm{\tnorm{v(t)}_\beta^2}.
    \]
  \end{theorem}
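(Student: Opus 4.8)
The plan is to reduce the statement to the abstract convergence results of \cite{FrM16}: once the discrete scheme, the weighted norms and the interpolation operator $I$ are recognised as an instance of the framework of that paper, the estimate follows by inserting the explicit mesh-dependent quantities $g_{L^2}(N)$ and $g_e(N)$.

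First I would place the problem into the abstract setting of \cite{FrM16}, namely a dG-in-time, conforming-Galerkin-in-space discretisation of an abstract parabolic equation governed by a bilinear form that is coercive on the energy space. Here the ambient $L^2$-space is $L^2(\Omega)$ with scalar product $\scp{\cdot,\cdot}_{\beta,\Omega}$ and norm $\norm{\cdot}{\beta}$, the energy space is $H^1_0(\Omega)$ with norm $\tnorm{\cdot}_\beta$, and the spatial bilinear form is $B_\beta$. The five hypotheses of \cite{FrM16} are then checked exactly as listed before the theorem: Assumption~1 is immediate from the definition of $\tnorm{\cdot}_\beta$ since $\gamma>0$ is a fixed constant; Assumption~2 is supplied by the interpolation estimates \eqref{eq:inter:S}, \eqref{eq:inter:S2} on S-type meshes and \eqref{eq:inter:D} on Dur\'{a}n meshes, which identify $g_{L^2}(N)$ and $g_e(N)$ as in \eqref{gS}, \eqref{gD}; Assumption~3 combines the coercivity of Lemma~\ref{lem:coercivity} (with constant $c=\tfrac12$) with the boundedness bound extracted in \eqref{eq:bound}, giving the consistency factor $g_d(N)=g_e(N)$; Assumption~4 holds trivially as no stabilisation is used; and Assumption~5, the transfer of temporal regularity, follows from the solution decomposition \eqref{eq:decomp:time}, in which the time derivatives do not enlarge the layer bounds.

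With all assumptions verified, I would invoke \cite[Theorem~3.7]{FrM16} to obtain the pointwise-in-time $L^2$-bound $\sup_{t\in(0,T)}\norm{(u-U_h^\tau)(t)}{\beta}\lesssim T(\tau^{q+1}+g_e(N))$ --- here one also uses that $U_0^{-}\in\U_h$ is chosen so that $\norm{u_0-U_0^{-}}{\beta}\lesssim g_{L^2}(N)$ --- and \cite[Theorem~3.9]{FrM16} to obtain the energy-type bound $\tnorm{u-U_h^\tau}_{Q,\beta}\lesssim T(\tau^{q+1}+g_e(N))$ in precisely the quadrature norm built from $\Qm{\cdot}$. Adding the two inequalities and substituting $g_e(N)=(h+N^{-1}\max|\psi'|)^k$ on an S-type mesh, respectively $g_e(N)=N^{-k}(\ln(1/\eps))^k$ on a Dur\'{a}n mesh, yields the assertion; taking $\beta=\beta_b$ turns it into a genuine balanced-norm estimate, while $\beta=\beta_e$ recovers the classical energy-norm result.

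The only real content beyond bookkeeping --- and the step I expect to be the main obstacle --- is making sure the weighted, \emph{non-symmetric} bilinear form $B_\beta$ genuinely fits the hypotheses of \cite{FrM16}. The weight introduces the indefinite term $\eps^2\scp{\pt_x v,(\pt_x\beta)v}_\Omega$ and the shift the term $\scp{bu(\cdot-1),v}_{\beta,(1,2)}$, so $B_\beta$ is neither symmetric nor $L^2(\Omega)$-coercive, only $\tnorm{\cdot}_\beta$-coercive; one must confirm that the abstract theory of \cite{FrM16} is formulated for exactly this situation (coercivity in the energy norm together with a consistency-type boundedness bound) and that every hidden constant is $\eps$-uniform. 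The latter is guaranteed by the structural conditions \eqref{eq:beta} on $\beta$ --- in particular $|\pt_x\beta|\le(\alpha/\eps)\beta$ and $\int\beta\dx\lesssim1$ hold uniformly --- and by the uniform reaction gap $\gamma$ in \eqref{5}. The factor $T$ in the final bound is the Gronwall constant inherited from \cite{FrM16} and is carried through unchanged.
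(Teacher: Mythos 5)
Your proposal is correct and follows essentially the same route as the paper: verify Assumptions 1--5 of \cite{FrM16} for the weighted bilinear form $B_\beta$ (coercivity from Lemma~\ref{lem:coercivity}, the interpolation bounds \eqref{eq:inter:S}--\eqref{eq:inter:S2} and \eqref{eq:inter:D} supplying $g_{L^2}$ and $g_e$, and the solution decomposition \eqref{eq:decomp:time} for the temporal regularity), then invoke Theorems~3.7 and~3.9 of that reference. Your additional remarks on the initial approximation $U_0^-$ and on the $\eps$-uniformity of the constants via \eqref{eq:beta} are consistent with, and slightly more explicit than, the paper's own presentation.
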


  \begin{remark}
    For non-stationary problems and standard FEM in space it is an open problem to prove uniform estimates in a balanced norm.
    The weighted technique based on \cite{MS21} and presented here has the big advantage to yield such a balanced error estimation.
    This technique is not restricted to the spatial shift problem studied here, but
    can also be applied to other time-dependent singularly perturbed problems.
  \end{remark}

  \section{Numerical results}\label{sec:numerics}

  To illustrate our theoretical results, let us consider the following problem $Lu=f$ in $(0,2)\times(0,T)$ with non-constant coefficients
  \begin{align*}
    Lu(x,t) &= \partial_t u(x,t)-\varepsilon^2 \pt_{xx}u(x,t) - \left(1+\frac{1}{2}x^2\right)u(x-1,t) + 2\cosh(x-1)u(x,t),\\%
    f(x,t)  &= \mathrm{e}^{\frac{1}{2}x}
  \intertext{with initial and boundary conditions}
    u(x,0)  &= 0\mbox{ for }x\in [0,2], \\
    u(x,t)  &= \varphi(x,t)\mbox{ for }x\in[-1,0]\mbox{ and }t\in[0,T], \\
    u(1,t)  &= 0\mbox{ for }t\in[0,T],	
  \end{align*}
  with a function $\varphi$ specified below.
  An exact solution of this problem is not known. Therefore, we use a numerically computed reference solution as a substitute for the exact
  solution in computing our errors. For this, both in space and time, we use discrete FE-spaces on meshes with twice the number of cells
  and with polynomial degrees by one larger than the largest one considered.

  For numerical approximations using degrees $q$ and $k$ the theoretical findings propose our method to converge with order $q+1$
  in time and $k$ in space. Thus, equilibrating, we choose $k = q+1$ throughout our calculations. Moreover we use an
  equidistant mesh in time with $N=4M$ cells and therefore coupling spatial and temporal resolution.

  Let us begin by considering the homogeneous case, i.e. $\varphi\equiv 0$. For
  $\varepsilon = 10^{-4}$, the resulting errors in the energy norm and the $L_2$-norm are
  displayed in Table~\ref{tab:1}. Here we additionally distinguish between the weighted case and
  the non-weighted case as described above.

  \begin{table}[htbp]
    \caption{First and second order of convergence for the homogeneous problem \label{tab:1}}
    \begin{center}

      \begin{tabular}{rllllllll}
          N & \multicolumn{2}{c}{$\norm{u-U_h^\tau}{\beta_e}$} & \multicolumn{2}{c}{$\tnorm{u-U_h^\tau}_{\beta_e}$}
            & \multicolumn{2}{c}{$\norm{u-U_h^\tau}{\beta_b}$} & \multicolumn{2}{c}{$\tnorm{u-U_h^\tau}_{\beta_b}$} \\
          \hline
          \multicolumn{9}{c}{$k = 1,\,q=0$}\\ 
          \hline
			64 & 2.89e-02 & 0.95 & 2.89e-02 & 0.95 & 4.00e-02 & 0.96 & 1.32e-01 & 0.99 \\
			128 & 1.49e-02 & 0.98 & 1.50e-02 & 0.98 & 2.06e-02 & 0.98 & 6.65e-02 & 1.00 \\
			256 & 7.59e-03 & 0.99 & 7.60e-03 & 0.99 & 1.05e-02 & 0.99 & 3.31e-02 & 1.01 \\
			512 & 3.83e-03 & 0.99 & 3.83e-03 & 0.99 & 5.27e-03 & 0.99 & 1.65e-02 & 1.01 \\
			1024 & 1.92e-03 &   & 1.92e-03 &   & 2.65e-03 &   & 8.20e-03 &   \\
          \hline
          \multicolumn{9}{c}{$k = 2,\,q=1$}\\ 
          \hline
          64 & 5.22e-04 & 1.99 & 5.60e-04 & 1.99 & 1.31e-03 & 2.12 & 1.62e-02 & 1.95 \\
          128 & 1.31e-04 & 1.99 & 1.41e-04 & 1.99 & 3.01e-04 & 1.88 & 4.18e-03 & 1.90 \\
          256 & 3.30e-05 & 2.00 & 3.57e-05 & 1.97 & 8.18e-05 & 1.80 & 1.12e-03 & 1.74 \\
          512 & 8.26e-06 & 2.00 & 9.13e-06 & 1.90 & 2.35e-05 & 1.76 & 3.37e-04 & 1.49 \\
          1024 & 2.07e-06 &    & 2.45e-06 &    & 6.95e-06 &    & 1.20e-04 &    \\
      \end{tabular}
    \end{center}
  \end{table}

  Similar results are also obtained for different values of $\eps$ (see Table \ref{tab:3}), thereby confirming the robustness in $\eps$.

  \begin{table}[htbp]
  	\caption{$\varepsilon$-robustness for $k = 2$, $q = 1$, $\varphi\equiv 0$ and $\beta=\beta_e$ \label{tab:3}}
  	\begin{center}

  		\begin{tabular}{rllllllllll}
  			$N / \varepsilon$ & \multicolumn{2}{c}{$10^{-2}$} & \multicolumn{2}{c}{$10^{-4}$}
  			& \multicolumn{2}{c}{$10^{-6}$} & \multicolumn{2}{c}{$10^{-8}$} & \multicolumn{2}{c}{$10^{-10}$}\\
  			\hline
  			\multicolumn{9}{c}{$\norm{u-U_h^\tau}{\beta_e}$}\\ 
  			\hline
  			  64 & 5.40e-04 &   2.01& 5.22e-04 &   1.99& 5.22e-04 &   1.99& 5.22e-04 &   1.99& 5.22e-04 &   1.99\\
  			 128 & 1.34e-04 &   1.99& 1.32e-04 &   1.99& 1.31e-04 &   1.99& 1.31e-04 &   1.99& 1.31e-04 &   1.99\\
  			 256 & 3.38e-05 &   1.98& 3.30e-05 &   2.00& 3.30e-05 &   2.00& 3.30e-05 &   2.00& 3.30e-05 &   2.00\\
  			 512 & 8.55e-06 &   1.98& 8.26e-06 &   2.00& 8.26e-06 &   2.00& 8.26e-06 &   2.00& 8.26e-06 &   2.00\\
  			1024 & 2.17e-06 &       & 2.07e-06 &       & 2.07e-06 &       & 2.07e-06 &       & 2.07e-06 &    \\
  			\hline
  			\multicolumn{9}{c}{$\tnorm{u-U_h^\tau}_{\beta_e}$}\\ 
  			\hline
  			  64 & 2.11e-03 &   1.98& 5.60e-04 &   1.99& 5.22e-04 &   1.99& 5.22e-04 &   1.99& 5.22e-04 &   1.99\\
  			 128 & 5.35e-04 &   1.93& 1.41e-04 &   1.99& 1.32e-04 &   1.99& 1.31e-04 &   1.99& 1.31e-04 &   1.99\\
  			 256 & 1.40e-04 &   1.81& 3.57e-05 &   1.97& 3.30e-05 &   2.00& 3.30e-05 &   2.00& 3.30e-05 &   2.00\\
  			 512 & 3.99e-05 &   1.59& 9.13e-06 &   1.90& 8.27e-06 &   2.00& 8.26e-06 &   2.00& 8.26e-06 &   2.00\\
  			1024 & 1.32e-05 &       & 2.45e-06 &       & 2.07e-06 &       & 2.07e-06 &       & 2.07e-06 &    \\
  		\end{tabular}
  	\end{center}
  \end{table}

  For degree one and two we observe in those two tables the theoretical uniform orders w.r.t. $\eps$.
  Interestingly, as can be observed in Figure \ref{fig:HOC}
  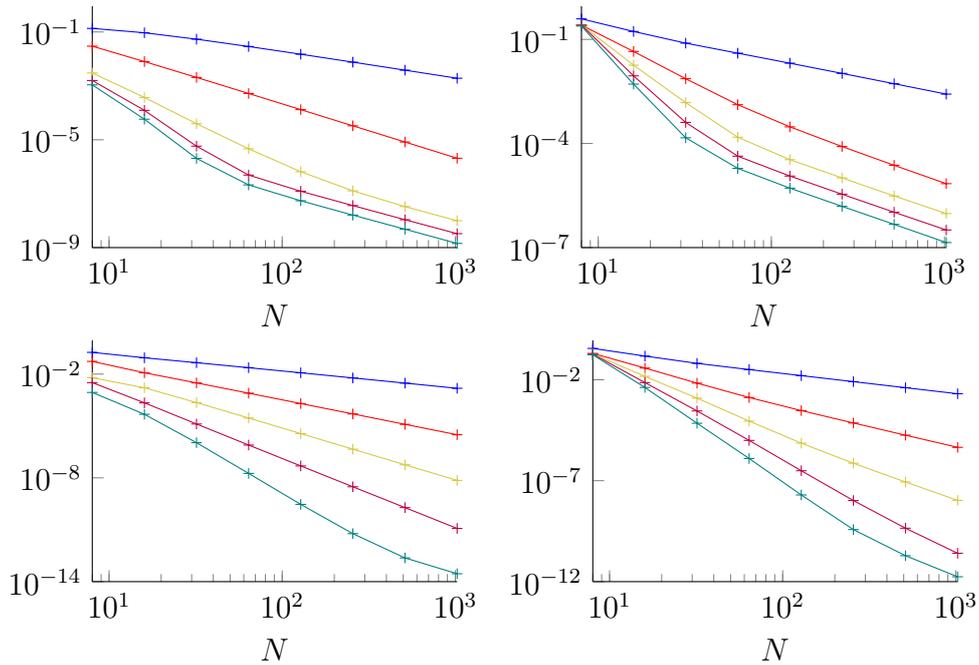
\begin{figure}[htbp]
    \begin{center}
%
%
%
\begin{tikzpicture}

\begin{axis}[%
width=0.3\textwidth,
height=0.2\textwidth,
scale only axis,
xmode=log,xmin=8,xmax=1024,xlabel={$N$},
ymode=log,ymin=1e-09,ymax=0.9,
xminorticks=true,yminorticks=true,
axis background/.style={fill=white},
title style={font=\bfseries, align=center},
axis x line*=bottom,
axis y line*=left
]
\addplot [color=blue, mark=+, mark options={solid, blue}, forget plot]
  table[row sep=crcr]{%
8	0.135315208480472\\
16	0.093459060223656\\
32	0.0539358095509012\\
64	0.0288817525112369\\
128	0.0149346864935304\\
256	0.00759250337072408\\
512	0.00382742642735684\\
1024	0.00192115559581216\\
};
\addplot [color=red, mark=+, mark options={solid, red}, forget plot]
  table[row sep=crcr]{%
8	0.0296798338394518\\
16	0.00797711200662552\\
32	0.00205750269541077\\
64	0.000522037676293642\\
128	0.000131459773861016\\
256	3.29847244397533e-05\\
512	8.26183981728057e-06\\
1024	2.06775980878992e-06\\
};
\addplot [color=yellow!80!black, mark=+, mark options={solid, yellow!80!black}, forget plot]
  table[row sep=crcr]{%
8	0.00301191849922454\\
16	0.000372021173479375\\
32	3.94821938598665e-05\\
64	4.63053883502199e-06\\
128	6.53124626688498e-07\\
256	1.2737884020581e-07\\
512	3.32288958388529e-08\\
1024	9.9324816584793e-09\\
};
\addplot [color=purple, mark=+, mark options={solid, purple}, forget plot]
  table[row sep=crcr]{%
8	0.00158132715037468\\
16	0.000123630340843447\\
32	5.79426742530629e-06\\
64	4.9006275131498e-07\\
128	1.23362339080461e-07\\
256	3.62586945677627e-08\\
512	1.08262950241877e-08\\
1024	3.29179906918482e-09\\
};
\addplot [color=teal, mark=+, mark options={solid, teal}, forget plot]
  table[row sep=crcr]{%
8	0.00109092512569833\\
16	5.80203404411259e-05\\
32	2.03952524580991e-06\\
64	2.11626055583503e-07\\
128	5.46256584032861e-08\\
256	1.60509581285341e-08\\
512	4.80980882739079e-09\\
1024	1.42714699747054e-09\\
};
\end{axis}
\end{tikzpicture}%
%
%
%
\begin{tikzpicture}

\begin{axis}[%
width=0.3\textwidth,
height=0.2\textwidth,
scale only axis,
xmode=log,xmin=8,xmax=1024,xlabel={$N$},
ymode=log,ymin=1e-07,ymax=0.9,
xminorticks=true,yminorticks=true,
axis background/.style={fill=white},
title style={font=\bfseries, align=center},
axis x line*=bottom,
axis y line*=left
]
\addplot [color=blue, mark=+, mark options={solid, blue}, forget plot]
  table[row sep=crcr]{%
8	0.389536638067543\\
16	0.168674067481597\\
32	0.0778152899651142\\
64	0.03999954229656\\
128	0.0205578236450838\\
256	0.0104530938649471\\
512	0.00527383258857542\\
1024	0.00264859958269374\\
};
\addplot [color=red, mark=+, mark options={solid, red}, forget plot]
  table[row sep=crcr]{%
8	0.25644066725532\\
16	0.0447727040342133\\
32	0.00734704604903332\\
64	0.00130993461407785\\
128	0.000301266046630698\\
256	8.17996385433786e-05\\
512	2.35083372247577e-05\\
1024	6.94618282709945e-06\\
};
\addplot [color=yellow!80!black, mark=+, mark options={solid, yellow!80!black}, forget plot]
  table[row sep=crcr]{%
8	0.261296154634287\\
16	0.0180308259165261\\
32	0.00150660814606859\\
64	0.000151048153022991\\
128	3.4204555276096e-05\\
256	1.01659319854436e-05\\
512	3.07277699538923e-06\\
1024	9.57803581502343e-07\\
};
\addplot [color=purple, mark=+, mark options={solid, purple}, forget plot]
  table[row sep=crcr]{%
8	0.258683596505945\\
16	0.00889327070161178\\
32	0.000403753398158284\\
64	4.30576694945731e-05\\
128	1.15221086715029e-05\\
256	3.4548199859533e-06\\
512	1.04641390667156e-06\\
1024	3.21617142803307e-07\\
};
\addplot [color=teal, mark=+, mark options={solid, teal}, forget plot]
  table[row sep=crcr]{%
8	0.241680954969328\\
16	0.00513211799025273\\
32	0.000145968188375703\\
64	1.91698751236003e-05\\
128	5.1639747985925e-06\\
256	1.54263327854629e-06\\
512	4.67845759742015e-07\\
1024	1.3993750122705e-07\\
};
\end{axis}
\end{tikzpicture}
%
%
%
\begin{tikzpicture}

\begin{axis}[%
width=0.3\textwidth,
height=0.2\textwidth,
scale only axis,
xmode=log,xmin=8,xmax=1024,xlabel={$N$},
ymode=log,ymin=1e-14,ymax=0.9,
xminorticks=true,yminorticks=true,
axis background/.style={fill=white},
title style={font=\bfseries, align=center},
axis x line*=bottom,
axis y line*=left
]
\addplot [color=blue, mark=+, mark options={solid, blue}, forget plot]
  table[row sep=crcr]{%
8	0.180222719485403\\
16	0.0876815777279349\\
32	0.045515123627484\\
64	0.0234131843268256\\
128	0.0118981127014027\\
256	0.0060005066586239\\
512	0.00301381969424983\\
1024	0.00151065430828045\\
};
\addplot [color=red, mark=+, mark options={solid, red}, forget plot]
  table[row sep=crcr]{%
8	0.0541862931522343\\
16	0.0119248074015079\\
32	0.00309742441071584\\
64	0.000789022681202713\\
128	0.000198690438768714\\
256	4.98201761775478e-05\\
512	1.24713646672475e-05\\
1024	3.1197426176481e-06\\
};
\addplot [color=yellow!80!black, mark=+, mark options={solid, yellow!80!black}, forget plot]
  table[row sep=crcr]{%
8	0.00612896197455106\\
16	0.00162068069617364\\
32	0.000224751742372772\\
64	2.88480472963885e-05\\
128	3.63497525691669e-06\\
256	4.55622449421857e-07\\
512	5.70133247003888e-08\\
1024	7.12991383101868e-09\\
};
\addplot [color=purple, mark=+, mark options={solid, purple}, forget plot]
  table[row sep=crcr]{%
8	0.00320401058559875\\
16	0.000218746688035149\\
32	1.30439431941783e-05\\
64	7.89791591886442e-07\\
128	4.89692512217828e-08\\
256	3.05727136280723e-09\\
512	1.91117703600114e-10\\
1024	1.19482109162023e-11\\
};
\addplot [color=teal, mark=+, mark options={solid, teal}, forget plot]
  table[row sep=crcr]{%
8	0.000870816962923113\\
16	4.74959441478922e-05\\
32	1.08309195681439e-06\\
64	1.81005798710303e-08\\
128	2.92835962291042e-10\\
256	5.95251882431158e-12\\
512	2.35270698870247e-13\\
1024	2.87340225635346e-14\\
};
\end{axis}
\end{tikzpicture}%
%
%
%
\begin{tikzpicture}

\begin{axis}[%
width=0.3\textwidth,
height=0.2\textwidth,
scale only axis,
xmode=log,xmin=8,xmax=1024,xlabel={$N$},
ymode=log,ymin=1e-12,ymax=0.9,
xminorticks=true,yminorticks=true,
axis background/.style={fill=white},
title style={font=\bfseries, align=center},
axis x line*=bottom,
axis y line*=left
]
\addplot [color=blue, mark=+, mark options={solid, blue}, forget plot]
  table[row sep=crcr]{%
8	0.356250423027417\\
16	0.149044059834244\\
32	0.0648435606493117\\
64	0.0318677882975527\\
128	0.0160035532352378\\
256	0.00804569427739693\\
512	0.00403741701471074\\
1024	0.0020231746581052\\
};
\addplot [color=red, mark=+, mark options={solid, red}, forget plot]
  table[row sep=crcr]{%
8	0.201681402065384\\
16	0.0379445858625874\\
32	0.0067962537502321\\
64	0.00131839975448104\\
128	0.000297192296329283\\
256	7.22757659699196e-05\\
512	1.79616984535645e-05\\
1024	4.48591638882052e-06\\
};
\addplot [color=yellow!80!black, mark=+, mark options={solid, yellow!80!black}, forget plot]
  table[row sep=crcr]{%
8	0.188692481475902\\
16	0.0147492232446108\\
32	0.00122714870324117\\
64	8.98998181780906e-05\\
128	7.25033509167251e-06\\
256	7.34833122405184e-07\\
512	8.61877997862004e-08\\
1024	1.06081969280954e-08\\
};
\addplot [color=purple, mark=+, mark options={solid, purple}, forget plot]
  table[row sep=crcr]{%
8	0.185429211426633\\
16	0.00723583497444059\\
32	0.000285113096365774\\
64	9.88392363249846e-06\\
128	3.11606494460311e-07\\
256	1.04145130847412e-08\\
512	4.43352646299773e-10\\
1024	2.54063138335057e-11\\
};
\addplot [color=teal, mark=+, mark options={solid, teal}, forget plot]
  table[row sep=crcr]{%
8	0.172615314352457\\
16	0.00414793246536534\\
32	7.01974260845749e-05\\
64	1.25667376029437e-06\\
128	1.96656952043163e-08\\
256	3.79400092184954e-10\\
512	1.92718916798628e-11\\
1024	1.74507065522528e-12\\
};
\end{axis}
\end{tikzpicture}%
    \end{center}
  	\caption{Convergence results in $\tnorm{\cdot}_{\beta}$ for $\eps=10^{-4}$, $\phi\equiv0$, $k=1$ (blue) to $k=5$ (teal), and
  	$\beta=\beta_e$ (left) and $\beta=\beta_b$ (right), and
  	right-hand sides $f(x,t)=\mathrm{e}^{\frac{1}{2}x}$ (upper) and $f(x,t)=\mathrm{e}^{\frac{1}{2}x}\cdot\begin{cases}t^2(12-16t),&t<1/2\\1,&t\geq1/2\end{cases}$(lower).
  	\label{fig:HOC}}
  \end{figure}
  for higher polynomial degrees, we observe a decrease
  in the convergence orders, if the right-hand side is not decreasing fast enough to zero in the corners of our
  space-time domain. This seems to imply a loss of regularity in the solution, and thereby our assumption on the
  solution decomposition is no longer true. For smaller values of $\eps$ this reduction is less dominant.
  
  Furthermore,
  the balanced method emphasises this stronger than the standard weigh\-ted method, as can be seen in the figure
  and also in Table~\ref{tab:1} for $k=2$.
  With an inhomogeneous shift data like $\varphi(x) = 3x^2$ one also attains the theoretical orders of convergence w.r.t. $\eps$,
  see Table~\ref{tab:2}. 
  \begin{table}[htbp]
	\caption{First and second order of convergence for $\varphi(x) = 3x^2$ \label{tab:2}}
    \begin{center}

      \begin{tabular}{rllllllll}
          N & \multicolumn{2}{c}{$\norm{u-U_h^\tau}{\beta_e}$} & \multicolumn{2}{c}{$\tnorm{u-U_h^\tau}_{\beta_e}$}
            & \multicolumn{2}{c}{$\norm{u-U_h^\tau}{\beta_b}$} & \multicolumn{2}{c}{$\tnorm{u-U_h^\tau}_{\beta_b}$} \\
          \hline
          \multicolumn{9}{c}{$k = 1,\,q=0$}\\ 
          \hline
			64 & 3.83e-02 & 0.95 & 3.84e-02 & 0.95 & 5.29e-02 & 0.96 & 1.82e-01 & 0.99 \\
			128 & 1.98e-02 & 0.98 & 1.98e-02 & 0.98 & 2.73e-02 & 0.97 & 9.14e-02 & 1.00 \\
			256 & 1.01e-02 & 0.99 & 1.01e-02 & 0.99 & 1.39e-02 & 0.99 & 4.56e-02 & 1.01 \\
			512 & 5.08e-03 & 0.99 & 5.09e-03 & 0.99 & 7.02e-03 & 0.99 & 2.27e-02 & 1.01 \\
			1024 & 2.55e-03 &    & 2.56e-03 &    & 3.53e-03 &    & 1.13e-02 &    \\
          \hline
          \multicolumn{9}{c}{$k = 2,\,q=1$}\\ 
          \hline
            64 & 7.45e-04 & 1.98 & 7.98e-04 & 1.98 & 2.01e-03 & 2.04 & 2.29e-02 & 1.93 \\
            128 & 1.88e-04 & 1.99 & 2.02e-04 & 1.98 & 4.88e-04 & 1.85 & 6.01e-03 & 1.85 \\
            256 & 4.74e-05 & 2.00 & 5.13e-05 & 1.95 & 1.36e-04 & 1.79 & 1.66e-03 & 1.67 \\
            512 & 1.19e-05 & 2.00 & 1.33e-05 & 1.87 & 3.92e-05 & 1.76 & 5.23e-04 & 1.43 \\
            1024 & 2.97e-06 &    & 3.63e-06 &    & 1.16e-05 &    & 1.94e-04 &    \\
      \end{tabular}
    \end{center}
  \end{table}

%

  \vspace{1em}
  {\bf Acknowledgment}. This paper was written during a visit by Mirjana Brdar to the Technical University of Dresden in August -- October
  2021 supported by DAAD grant, number 57552334 and partly supported by Ministry of Education, Science and Technological Development of the Republic of Serbia under
  grant no 451-03-9/2021-14/200134 and bilateral project "Singularly perturbed problems with multiple parameters" between Germany and Serbia.

\end{document}